\documentclass[11pt]{amsart}
\usepackage{amsmath,amssymb,amsthm}
\usepackage{url}
\usepackage{cleveref}
\usepackage[margin=1.2in]{geometry}
\newtheorem{theorem}{Theorem}[section]
\newtheorem{lemma}[theorem]{Lemma}
\newtheorem{proposition}[theorem]{Proposition}
\newtheorem{corollary}[theorem]{Corollary}
\newtheorem*{conjecture}{Conjecture}
\theoremstyle{remark}
\newtheorem{remark}[theorem]{Remark}

\newcommand{\Z}{\mathbb{Z}}
\DeclareMathOperator{\dist}{dist}
\DeclareMathOperator{\Rea}{Re}
\DeclareMathOperator{\Ima}{Im}

\title{Integer values of $\tan(\arctan 1+\arctan 2+\cdots+\arctan n)$ are rare}

\author{Ken Ono}
\address{Axiom Math, 124 University Avenue, Palo Alto, CA 94301}
\email{ken@axiommath.ai}
\date{\today}

\keywords{Arctangent sums, Gaussian integers, sums of two squares}
\subjclass[2020]{Primary 11D09, 11N37; Secondary 11A07, 11R11, 11Y55}

\begin{document}

\begin{abstract}
For $n\ge1$, we let $$x_n:=\tan\bigl(\sum_{k=1}^{n}\arctan k\bigr).$$
In 2008, Amdeberhan, Medina, and Moll conjectured that $x_n\notin\Z$
for every $n\ge5$. This was known for a set of positive integers of density
$\tfrac{120}{817}\approx0.1469$.  We prove that an integer value
$x_n=m$ satisfies $|m|\ge e^{(1/2+o(1))\,n\log n}$, which we use to deduce that
$$\#\{\,1\leq n\le N:x_n\in\Z\,\}=O(\log N).
$$
 In particular, the conjecture
holds for a density-one set of $n$. The results in this note were formalized in Lean/Mathlib
and produced autonomously by AxiomProver from natural-language statements.
\end{abstract}

\maketitle

\section{Introduction}\label{sec:intro}

In a study of arctangent sums, Boros and Moll \cite{BorosMoll2005}
considered the tangent of the running sum
$\arctan1+\cdots+\arctan n$,
\begin{equation}\label{eq:def-xn}
x_n \; :=\; \tan\Bigl(\sum_{k=1}^{n}\arctan k\Bigr).
\end{equation}
The addition formula for the tangent shows that $x_n$ is rational and
satisfies
\begin{equation}\label{eq:recurrence}
x_n \;=\; \frac{x_{n-1}+n}{1-n\,x_{n-1}},\qquad x_1=1.
\end{equation}
The sequence opens
\[
1,\ -3,\ 0,\ 4,\ -\tfrac{9}{19},\ \tfrac{105}{73},\
-\tfrac{308}{331},\ \tfrac{36}{43},\ \dots,
\]
and the integers cease abruptly. The first four terms are integers,
and apparently none are thereafter.  Amdeberhan, Medina, and Moll
\cite{AMM2008} proved $x_n\ne0$ for $n\ge4$ by computing $\nu_2(x_n)$,
and conjectured the following.

\begin{conjecture}[Amdeberhan--Medina--Moll {\cite{AMM2008}}]
If $n\ge5$, then $x_n$ is not an integer.
\end{conjecture}

The study of such arctangents goes back to Todd~\cite{Todd1949},
who considered integers \(m\) for which \(\arctan m\) can be written as a
\(\mathbb Z\)-linear combination of arctangents of smaller positive
integers.  Todd showed that the resulting notions of reducibility and
irreducibility are both infinite phenomena. There are infinitely many
integers admitting such reductions and infinitely many that do not.

In the present paper, we study a more rigid variant, suggested by
Amdeberhan, Medina, and Moll, in which the smaller arguments are forced
to be exactly \(1,2,\ldots,n\), each occurring with coefficient \(+1\).
Their conjecture asserts that, apart from the four known cases, this
rigid family produces no further integer values.

It turns out that the $x_n$ are governed by the arithmetic of Gaussian integers.
The relevant arithmetic enters through the expression
\[
        Z_n:=\prod_{k=1}^n(1+ik)=A_n+iB_n,\qquad A_n,B_n\in\mathbb Z.
\]
Then we have that
\[
        \arg Z_n=\arctan 1+\arctan 2+\cdots+\arctan n
        \pmod{\pi},
\]
and hence, whenever \(A_n\ne 0\), we have
\[
        x_n=\tan(\arg Z_n)=\frac{B_n}{A_n}.
\]
Moreover, we have
\[
        A_n^2+B_n^2=|Z_n|^2
        =\prod_{k=1}^n(1+k^2)=:\omega_n .
\]
Therefore, the question whether \(x_n\) is an integer is simultaneously an
analytic question about the position of the angle \(\arg Z_n\) relative
to the poles of the tangent function, and an arithmetic question about
the factorization of \(\omega_n\).

Several partial results toward the conjecture were known before the
present work.  Amdeberhan, Medina, and Moll proved a number of basic
structural facts, including that two consecutive values \(x_{n-1}\) and
\(x_n\) cannot both be integers~\cite[Thm.~4.1]{AMM2008}.  They also gave
criteria excluding integer values in certain arithmetic situations; for
example, if \(1+n^2\) is prime, then \(x_n\) is not an integer.  This
criterion is conditional in scope, since the infinitude of such \(n\) is
Landau's classical problem on primes of the form \(1+n^2\).  Their work
also relates integer values of \(x_n\) to the square structure of
\[
        \omega_n=\prod_{k=1}^n(1+k^2),
\]
while Cilleruelo proved that this product is a square only for
\(n=3\)~\cite{Cilleruelo2008}.  Thus this particular obstruction does not
settle the conjecture beyond the small exceptional case.

The strongest unconditional progress prior to this paper is due to
Moll~\cite{Moll2013}.  Using congruence properties of the sequence
\[
        f_n=(-1)^{n+1}\Re\prod_{k=0}^n(1+ik),
\]
he proved that \(x_n\) is not an integer for
\[
        n\equiv 5,13 \pmod {19}
        \qquad\text{and}\qquad
        n\equiv 8,34 \pmod {43}.
\]
These congruence classes have combined density
\[
        1-\frac{17}{19}\cdot\frac{41}{43}
        =\frac{120}{817}\approx 0.1469.
\]
Consequently, before the present work, the conjecture remained open for
about \(85\%\) of all positive integers \(n\).

We settle the conjecture for almost every $n$.  The key point is to use
this identity prime by prime.  For a prime $p$, let $\nu_p$ denote the
$p$-adic valuation, and define the \emph{squarefree kernel}
\begin{equation}\label{eq:def-K}
K_n\;=\!\!\prod_{\nu_p(\omega_n)\ \mathrm{odd}}\!\! p,
\end{equation}
the product of the primes dividing $\omega_n$ to an odd power.  If
$x_n=m\in\Z$, then the identity above gives
\[
        \omega_n=(1+m^2)A_n^2.
\]
The square factor $A_n^2$ contributes only even exponents to
$\omega_n$, and hence every prime occurring to odd order in
$\omega_n$ must divide $1+m^2$.  Equivalently, $K_n\mid 1+m^2$.
This yields our main theorem.

\begin{theorem}\label{thm:main} If $x_n=m$ is an integer, then the following hold.

\smallskip
\noindent
{\rm(1)} We have that $K_n\mid(1+m^2)$.

\smallskip
\noindent
{\rm(2)} If $K_n>1$, then
 $|m|\ge\sqrt{K_n-1}$ and
 \[
\log|m|\;\ge\;\Bigl(\tfrac12+o(1)\Bigr)\,n\log n .
\]
\end{theorem}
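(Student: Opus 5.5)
Part (1) is essentially the argument sketched just before the statement. If $x_n=m\in\Z$, then $A_n\neq 0$, since otherwise $x_n$ would be undefined; this needs a word of justification, or we can simply take the hypothesis ``$x_n=m$'' to include $A_n\ne0$. So $B_n=mA_n$ and
\[
\omega_n=A_n^2+B_n^2=(1+m^2)A_n^2 .
\]
For each prime $p$ this gives $\nu_p(\omega_n)=\nu_p(1+m^2)+2\nu_p(A_n)$. Hence if $\nu_p(\omega_n)$ is odd, then $\nu_p(1+m^2)$ is odd, so in particular $p\mid 1+m^2$. Since $K_n$ is squarefree, multiplying over all such primes gives $K_n\mid 1+m^2$.

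For the first inequality in (2), note that $K_n>1$ and $K_n\mid 1+m^2$ with $1+m^2\ge 1$ give $1+m^2\ge K_n$, so $|m|\ge\sqrt{K_n-1}$. It remains to show $\log K_n\ge(1+o(1))\,n\log n$. Then $\log|m|\ge\tfrac12\log(K_n-1)=(\tfrac12+o(1))\,n\log n$, using $K_n\to\infty$, so the $-1$ is harmless.

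The main obstacle is the lower bound on $K_n$. My plan is to exhibit many large primes dividing $\omega_n$ to exactly the first power. Take a prime $p$ with $n<p\le$ something like $n^2$, and with $p\mid 1+k^2$ for some $k\le n$. The key observation is that if $p>2n$, then $p$ divides at most one factor $1+k^2$ with $1\le k\le n$. Indeed, the roots of $x^2\equiv-1\pmod p$ are $\pm k_0$, and two distinct $k,k'\le n<p/2$ cannot both be roots: we cannot have $k\equiv k'$, and $k+k'<p$ rules out $k'\equiv-k$. Also $p^2\mid 1+k^2$ forces $p^2\le 1+n^2$, i.e.\ $p\le n$ roughly. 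So every prime $p>2n$ dividing $\omega_n$ appears to odd order if it divides exactly once; in fact, for $p>\sqrt{1+n^2}$ its exponent is exactly $1$. Therefore
\[
K_n\;\ge\;\prod_{p>2n,\ p\mid\omega_n}p .
\]
Now write each $1+k^2=S_k L_k$, where $L_k$ is the part composed of primes $>2n$ and $S_k$ is the $2n$-smooth part. The $L_k$ are pairwise coprime squarefree numbers, so $\log K_n\ge\sum_k\log L_k=\log\omega_n-\sum_k\log S_k$. We have $\log\omega_n=2n\log n+O(n)$. The smooth part $\sum_k\log S_k$ equals $\sum_{p\le 2n}\nu_p(\omega_n)\log p$. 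For an odd prime $p\equiv1\pmod 4$, the number of $k\le n$ with $p^j\mid 1+k^2$ is at most $2(n/p^j+1)$, and the relevant $j$ satisfy $p^j\le 1+n^2$. This gives
\[
\nu_p(\omega_n)\le 2n/(p-1)+O(\log n/\log p),
\]
and $\nu_2\le n$. Summing, $\sum_{p\le 2n}\nu_p(\omega_n)\log p\le 2n\sum_{p\le 2n}\frac{\log p}{p-1}+O(\pi(2n)\log n)$. By Mertens this is $2n\log(2n)\cdot\tfrac12$ up to $O(n)$, because only primes $\equiv1\pmod4$ contribute and they carry half the Mertens sum. So the smooth part is $n\log n+O(n)$. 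Then $\log K_n\ge 2n\log n-n\log n+O(n)=(1+o(1))\,n\log n$, which is the required bound. The step needing the most care is this Mertens-type count for the smooth part, in particular using the restriction to $p\equiv1\pmod 4$; without it the bound degenerates to $0$. I would verify it via the prime number theorem in arithmetic progressions, or Mertens' theorem for $p\equiv1\pmod4$.
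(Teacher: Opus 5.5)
Your proposal is correct and follows the paper's proof essentially step for step: the parity of $\nu_p$ in $\omega_n=(1+m^2)A_n^2$ gives part (1), then $1+m^2\ge K_n$, and $K_n$ is bounded below using primes $p>2n$ occurring to the first power (the paper's Lemma~\ref{lem:rough}), with the $2n$-smooth part controlled by $\nu_p(\omega_n)\le 2n/(p-1)+O(\log n/\log p)$, Mertens in the class $1 \bmod 4$, and Chebyshev (Lemma~\ref{lem:growth}). One small imprecision: the exponent of $p$ in $\omega_n$ is guaranteed to be $1$ only for $p>2n$, not for all $p>\sqrt{1+n^2}$ (e.g.\ $\nu_5(\omega_4)=2$), but your final inequality only uses $p>2n$, so this is harmless.
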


By Theorem~\ref{thm:main} (2), any integer value $x_n=m$ must satisfy
\[
        |m|\ge \sqrt{K_n-1}
        \qquad {\text {\rm and}}\qquad  \log |m|\ge \bigl(\tfrac12+o(1)\bigr)n\log n.
\]
Thus an integer value, if it exists, is far larger than any fixed power
of $n$.  We therefore isolate the indices at which $x_n$ is already
large on the elementary scale $n$:
\begin{equation}\label{eq:def-E}
E\;=\;\bigl\{\,n\ge5:\ |x_n|>\tfrac n2+1\,\bigr\}.
\end{equation}
By definition, if $n\notin E$, then $|x_n|\le n/2+1$.  For all
sufficiently large such $n$, this contradicts the lower bound
$|m|\ge\sqrt{K_n-1}$, and hence $x_n$ cannot be an integer.  It remains
only to count the exceptional set $E$; we prove below that
\[
        \#(E\cap[1,N])=O(\log N).
\]

\begin{corollary}\label{cor:density}
There is an effectively computable $n_0$ with $x_n\notin\Z$ for all
$n\ge n_0$, $n\notin E$.  Moreover, we have that
$$\#\bigl(E\cap[1,N]\bigr)=O(\log
N).$$
In particular, the conjecture holds on a set of density one.
\end{corollary}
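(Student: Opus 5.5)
The plan has two parts: the first assertion follows from Theorem~\ref{thm:main}(2), and the bound on $E$ follows from reading $x_n$ as a tangent and tracking how slowly the angle moves.

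\emph{First assertion.} Suppose $n\ge5$, $n\notin E$, and $x_n=m\in\Z$. Then $|m|\le n/2+1$. We first need $K_n>1$. This holds because $K_n=1$ would make $\omega_n$ a perfect square, and by Cilleruelo's theorem \cite{Cilleruelo2008} that happens only for $n=3$. Theorem~\ref{thm:main}(2) then gives $\log|m|\ge(\tfrac12+o(1))\,n\log n$, which contradicts $\log|m|\le\log(n/2+1)$ once $n$ is large.

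To make $n_0$ effective, I would not rely on the $o(1)$ term as stated. Instead I would extract an explicit version of the lower bound for $K_n$ that underlies Theorem~\ref{thm:main}(2). Any crude explicit bound such as $K_n\ge n^{3}$ for $n\ge n_1$, with $n_1$ explicit, already suffices, since then $\sqrt{K_n-1}>n/2+1$. Such a bound should come from the fact that a prime $p$ dividing $1+k^2$ with $p>2n$ divides $\omega_n$ exactly once, because $p$ divides $1+j^2$ for at most two $j\le n$ and those two $j$ sum to $p$. The number of $k\le n$ for which $1+k^2$ has such a large prime factor is effectively $\gg n$, by a Chebyshev-type argument on $\prod_{k\le n}(1+k^2)$.

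\emph{Counting $E$.} Let $\phi_n\in[0,\pi)$ be the residue of $\theta_n:=\sum_{k\le n}\arctan k$ modulo $\pi$, so that $x_n=\tan\phi_n$. The condition $|x_n|>n/2+1$ is equivalent to
\[
|\phi_n-\pi/2|<\arctan\frac{1}{n/2+1}<\frac{2}{n}.
\]
Since $\arctan n=\pi/2-1/n+O(n^{-3})$, we have
\[
\phi_n\equiv\phi_{n-1}+\frac{\pi}{2}-\frac1n+O(n^{-3})\pmod{\pi}.
\]
The $\pi/2$ shift is awkward, so I would work with $\psi_n:=\theta_n-n\pi/2 \pmod\pi$ separately on the two parity classes of $n$. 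For each fixed parity, the target $\pi/2$ becomes a fixed target $\tau\in\{0,\pi/2\}$ modulo $\pi$, and the increment is $\psi_n-\psi_{n-1}=-1/n+O(n^{-3})$.

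Summing, $\psi_n=c-\log n+O(1/n)$ for an explicit constant $c$, so $\psi_n$ decreases monotonically for $n\ge2$ in steps of size about $1/n$. Therefore $\psi_n$ meets a given congruence class $\tau+j\pi$ only for $n$ with $\log n\in[c-\tau-j\pi-1,\,c-\tau-j\pi+1]$, that is, for $O(1)$ values of $j$ up to $\log N/\pi+O(1)$.

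Near each passage, the indices $n$ with $|\psi_n-(\tau+j\pi)|<2/n$ form a block of consecutive integers. Throughout this block $\psi$ changes by at most about $4/n$ while each step moves it by at least $1/(2n)$, so the block has $O(1)$ elements uniformly in $j$. Summing over the $O(\log N)$ passages and the two parity classes gives $\#(E\cap[1,N])=O(\log N)$. Density one follows, since every $n\ge n_0$ outside $E$ satisfies the conjecture and $E$ has density zero.

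\emph{Main obstacle.} I expect the hardest step to be the effective lower bound for $K_n$ needed for $n_0$, if Theorem~\ref{thm:main}'s proof does not already supply explicit constants. I would first try to reuse that proof verbatim, tracking constants. Failing that, I would use the elementary large-prime argument sketched above together with an explicit Chebyshev bound.
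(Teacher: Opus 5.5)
Your proposal is correct and follows the paper's route. The first assertion is the paper's comparison $\tfrac n2+1\ge|m|\ge\sqrt{K_n-1}$, made effective through the squarefree large-prime part $R_n$ of $\omega_n$. That effective bound is exactly Lemma~\ref{lem:growth}, whose constant $C$ is effectively computable via Stirling, Mertens for $p\equiv1\pmod 4$, and Chebyshev, so your ``main obstacle'' is already handled and the appeal to Cilleruelo is unnecessary. Your parity-class count of $E$ via $\psi_n=-a_n$ is the paper's proof of Lemma~\ref{lem:E} with $\dist(a_n,\tfrac\pi2\Z)<\tfrac2n$. The one step to make explicit is that a block cannot extend past $2n$, since $\sum_{n<k\le 2n}\arctan\frac1k\ge\tfrac25>\tfrac4n$ for $n\ge12$; this is what justifies your uniform step size $\asymp 1/n$.
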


The improvement over density $\tfrac{120}{817}$ is qualitative. The
unresolved set drops from positive density to logarithmic size.  The
kernel bound also explains the four integer values, each being the
least $m$ with $K_n\mid(1+m^2)$ at its index
(Section~\ref{sec:remarks}), the barrier becoming binding at $n=5$.

The proof is fairly elementary; it relies on standard facts such as Stirling's formula, Mertens' theorem, and Chebyshev's bound for the prime counting function.  Section~\ref{sec:barrier} establishes
Theorem~\ref{thm:main}, the growth of $K_n$ resting on two classical
estimates recalled there. Section~\ref{sec:exceptional} bounds $E$ and
proves Corollary~\ref{cor:density}. Section~\ref{sec:remarks} records
sharpness, two consequences, and the obstruction at the remaining
indices.  Finally, in Appendix~\ref{sec:AI}, we describe how AxiomProver, an AI
system under development, autonomously formalized and verified in Lean the
divisibility barrier of Theorem~\ref{thm:main}, the proximity bound behind
Lemma~\ref{lem:E}, and the density estimate of Corollary~\ref{cor:density}.

\section{The divisibility barrier}\label{sec:barrier}

We view $x_n$ through $\Z[i]$, and we let
\[
P_n=\prod_{k=1}^{n}(1+ik),\qquad A_n=\Rea P_n,\quad B_n=\Ima P_n .
\]
As $\arg(1+ik)=\arctan k$, we have $\arg P_n\equiv\sum_{k\le
n}\arctan k\pmod{2\pi}$. Since \eqref{eq:recurrence} never has a zero
denominator \cite[Cor.~2.7]{AMM2008}, $A_n\ne0$ and
\begin{equation}\label{eq:xn-AB}
x_n=\frac{B_n}{A_n}.
\end{equation}
Taking norms and using $|1+ik|^2=1+k^2$, we have
\begin{equation}\label{eq:norm}
A_n^2+B_n^2=|P_n|^2=\omega_n .
\end{equation}

The identity \eqref{eq:norm} lets us prove Theorem~\ref{thm:main},
which we restate here for convenience.

\begin{theorem}[$=$ Theorem~\ref{thm:main}]\label{thm:kernel}
If $x_n=m$ is an integer, then the following hold.

\smallskip
\noindent (1)  For every prime $p$ with $\nu_p(\omega_n)$ odd we have
$p\mid(1+m^2)$. As a consequence, we have  $K_n\mid(1+m^2)$.

\smallskip
\noindent
(2)  If $K_n>1$, then $|m|\ge\sqrt{K_n-1}$, and hence
$\log|m|\ge(\tfrac12+o(1))\,n\log n$.
\end{theorem}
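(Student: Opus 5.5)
Part~(1) is a valuation comparison, and part~(2) reduces to a lower bound $\log K_n\ge(1+o(1))\,n\log n$, which is where the work lies.

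\emph{Part (1).} If $x_n=m$, then by \eqref{eq:xn-AB} we have $B_n=mA_n$ with $A_n\ne0$. Substituting into \eqref{eq:norm} gives $\omega_n=(1+m^2)A_n^2$. For any prime $p$ this yields $\nu_p(\omega_n)=\nu_p(1+m^2)+2\nu_p(A_n)$. Hence if $\nu_p(\omega_n)$ is odd, then $\nu_p(1+m^2)$ is odd, in particular positive, so $p\mid 1+m^2$. Since $K_n$ is squarefree, the divisibility $K_n\mid 1+m^2$ follows.

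\emph{Part (2), first claim.} If $K_n>1$, then $1+m^2\ge K_n$, so $|m|\ge\sqrt{K_n-1}$. Thus it suffices to show $\log K_n\ge(1+o(1))\,n\log n$. I would write
\[
\log K_n\;\ge\;\log\omega_n-\sum_{p\le n}\nu_p(\omega_n)\log p-\sum_{n<p<2n}\nu_p(\omega_n)\log p ,
\]
which holds provided every prime $p\ge 2n$ dividing $\omega_n$ does so to exactly the first power. To see this, note that every $1+k^2$ with $k\le n$ is at most $n^2+1<p^2$, so $p^2\nmid 1+k^2$. Moreover, the residues $k$ with $p\mid 1+k^2$ are $r$ and $p-r$, and both are at most $n$ only if $p<2n$. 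So such a $p$ divides exactly one factor $1+k^2$, exactly once, and lies in $K_n$.

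\emph{Bounding the three terms.}
\begin{itemize}
\item By Stirling's formula, $\log\omega_n=2\log n!+O(1)=2n\log n+O(n)$.
\item For $n<p<2n$ we have $\nu_p(\omega_n)\le2$ by the same argument. Chebyshev's bound then makes this sum $O(n)$.
\item For $p\le n$, only $p=2$ and $p\equiv1\pmod 4$ occur. For such odd $p$ and each $j$, the congruence $k^2\equiv-1\pmod{p^j}$ has exactly two solutions by Hensel's lemma. Moreover $j\le J_p:=\log(n^2+1)/\log p$. Hence
\[
\nu_p(\omega_n)\le\sum_{j\le J_p}\Bigl(\frac{2n}{p^j}+2\Bigr)\le\frac{2n}{p-1}+2J_p .
\]
For $p=2$, $\nu_2(\omega_n)\le n$, since $4\nmid 1+k^2$.
\item Summing, the $2J_p\log p=O(\log n)$ terms contribute $O(\pi(n)\log n)=O(n)$. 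The main terms contribute
\[
2n\sum_{p\le n,\ p\equiv1\ (4)}\frac{\log p}{p-1}+O(n)=(1+o(1))\,n\log n ,
\]
using Mertens' theorem in the progression $1\bmod 4$, namely $\sum_{p\le n,\,p\equiv1(4)}\log p/p=\tfrac12\log n+O(1)$.
\end{itemize}
Combining, $\log K_n\ge 2n\log n-(1+o(1))\,n\log n-O(n)=(1+o(1))\,n\log n$, and therefore $\log|m|\ge(\tfrac12+o(1))\,n\log n$.

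\emph{Main obstacle.} The step I expect to be delicate is the small-prime sum. Using plain Mertens over all primes would give $2n\log n$, which cancels the main term entirely. So the restriction to $p\equiv1\pmod4$, together with the factor $\tfrac12$ from Mertens in arithmetic progressions, is essential. If only the classical Mertens theorem is allowed, I would try to recover the needed $\tfrac12\log n$ from the Gaussian-integer version: sum $\log N\pi/N\pi$ over Gaussian primes, then use $\log$ of the norm of $\prod(1+ik)$ to balance the count.
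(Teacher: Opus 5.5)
Your proposal is correct and follows the paper's proof essentially step for step. Part (1) is the same valuation comparison in $\omega_n=(1+m^2)A_n^2$, and part (2) reproduces the argument of Lemma~\ref{lem:growth}: primes $p>2n$ occur to the first power in $\omega_n$, followed by Stirling, the $p=2$ term, the bound $\nu_p(\omega_n)\le 2n/(p-1)+2\log(1+n^2)/\log p$, Mertens in the class $1\bmod 4$, and Chebyshev. The only difference, which is cosmetic, is that you treat $n<p<2n$ separately via $\nu_p(\omega_n)\le 2$ rather than running the Mertens sum up to $2n$.
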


\begin{proof}
By \eqref{eq:xn-AB}, $x_n=m$ gives $B_n=mA_n$, so \eqref{eq:norm}
becomes
\begin{equation}\label{eq:square-identity}
\omega_n=(1+m^2)\,A_n^2 ,
\end{equation}
which is \cite[Prop.~5.3]{AMM2008}.

\noindent\emph{Part (1).}  For each prime $p$, equation
\eqref{eq:square-identity} gives
\[
\nu_p(\omega_n)=\nu_p(1+m^2)+2\,\nu_p(A_n)\equiv\nu_p(1+m^2)\pmod2 ,
\]
so if $\nu_p(\omega_n)$ is odd then $\nu_p(1+m^2)\ge1$, that is,
$p\mid(1+m^2)$.  The primes dividing $\omega_n$ to an odd power are
distinct, so their product $K_n$ divides $1+m^2$ as well.

\noindent\emph{Part (2).}  Assume $K_n>1$.  Since $K_n\mid(1+m^2)$ by
part (1) and $1+m^2>0$, we have $1+m^2\ge K_n$, whence
$|m|\ge\sqrt{K_n-1}$.  Taking logarithms and applying
Lemma~\ref{lem:growth}, which gives $\log K_n=(1+o(1))\,n\log n$,
yields $\log|m|\ge\tfrac12\log(K_n-1)=(\tfrac12+o(1))\,n\log n$.
\end{proof}

The theorem is effective only if $K_n$ is large.  The first step is
that large primes cannot repeat in $\omega_n$.

\begin{lemma}\label{lem:rough}
If $p>2n$ is prime, then $\nu_p(\omega_n)\le1$.  Consequently
$R_n:=\prod_{p>2n}p^{\nu_p(\omega_n)}$ is squarefree and divides
$K_n$.
\end{lemma}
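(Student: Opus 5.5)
The plan is to show that a prime $p>2n$ divides at most one of the factors $1+k^2$, $1\le k\le n$, and divides that factor only to the first power. Both facts follow from the size bound $1+k^2\le 1+n^2<p^2$, together with the observation that two distinct $k$ sharing the prime $p$ force $p$ to be small.

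\emph{Step 1 (at most one factor).} Suppose $p\mid 1+j^2$ and $p\mid 1+k^2$ with $1\le j<k\le n$. Then $p\mid k^2-j^2=(k-j)(k+j)$, so $p$ divides $k-j$ or $k+j$. But $0<k-j<k+j\le 2n-1<p$, so $p$ divides neither. This contradiction shows $p$ divides at most one $1+k^2$.

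\emph{Step 2 (only to the first power).} If $p\mid 1+k^2$, then $\nu_p(1+k^2)\le 1$, because $p^2>4n^2\ge 1+n^2\ge 1+k^2$. Hence $p^2\nmid 1+k^2$. Combined with Step 1, this gives $\nu_p(\omega_n)=\sum_{k\le n}\nu_p(1+k^2)\le 1$.

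\emph{Step 3 (consequence).} Every prime $p>2n$ dividing $\omega_n$ therefore has $\nu_p(\omega_n)=1$, which is odd. So $p\mid K_n$ by the definition \eqref{eq:def-K}. It follows that $R_n=\prod_{p>2n,\ p\mid\omega_n}p$ is a product of distinct primes, hence squarefree. Each of these primes divides the squarefree integer $K_n$, so their product $R_n$ divides $K_n$.

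There is no real obstacle here. The only point needing care is the inequality $k+j\le 2n-1<p$ in Step 1, which is exactly where the threshold $2n$ is used.
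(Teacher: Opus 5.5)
Your proof is correct and follows the paper's argument: a prime $p>2n$ dividing two factors $1+j^2$ and $1+k^2$ would divide $(k-j)(k+j)$ with $0<k-j<k+j<p$, and it divides a single factor only once because $p^2>4n^2\ge 1+k^2$. The deduction that $R_n$ is squarefree and divides $K_n$ is the same as in the paper.
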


\begin{proof}
If $p\mid(1+j^2)$ and $p\mid(1+k^2)$ with $1\le j\le k\le n$, then
$j^2\equiv k^2\pmod p$, so $p\mid(k-j)$ or $p\mid(k+j)$; but
$0\le k-j<p$ and $0<k+j\le2n<p$, forcing $j=k$.  Thus $p$ divides at
most one factor of $\omega_n$, and only once as $p^2>4n^2>1+k^2$.  Each
such $p$ therefore occurs to the first power in $\omega_n$ and lies in
$K_n$.
\end{proof}

The growth of $K_n$ rests on two standard estimates, which we state
for reference.  The first is Stirling's formula in the form
\begin{equation}\label{eq:stirling}
\log n!=n\log n-n+O(\log n).
\end{equation}
The second is Mertens' theorem for the progression $1\bmod4$. Namely, there is
a constant $M$ with
\begin{equation}\label{eq:mertens}
\sum_{\substack{p\le x\\ p\equiv1\,(4)}}\frac{\log p}{p-1}
=\tfrac12\log x+M+o(1)\qquad(x\to\infty),
\end{equation}
the classical Mertens estimate distributed over the reduced residues
modulo $4$ \cite[Ch.~7]{Apostol1976}, with explicit error terms in
\cite{RamareRumely1996}.  We also use Chebyshev's bound
$\pi(x)=O(x/\log x)$ \cite[Thm.~4.6]{Apostol1976}, where $\pi(x)$
counts the primes up to $x$.

\begin{lemma}\label{lem:growth}
There is an absolute, effectively computable constant $C$ such that for all $n\geq 2$ we have
$$\log K_n\ge\log R_n\ge n\log n-Cn.
$$  In particular, we have that
$\log K_n=(1+o(1))\,n\log n$.
\end{lemma}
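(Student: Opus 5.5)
We need $\log R_n \ge n\log n - Cn$. The plan is to write $\log R_n = \log\omega_n - \sum_{p\le 2n}\nu_p(\omega_n)\log p$ and bound both terms.

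\emph{Main term.} Since $1+k^2\ge k^2$, we have $\log\omega_n\ge 2\log n! = 2n\log n-2n+O(\log n)$ by \eqref{eq:stirling}.

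\emph{Small-prime contribution.} Every odd prime $p$ dividing some $1+k^2$ satisfies $p\equiv1\pmod4$. The prime $2$ divides $1+k^2$ only to the first power, and only for odd $k$, so it contributes at most $n\log 2$. For a prime $p\equiv1\pmod 4$ with $p\le 2n$, the congruence $k^2\equiv-1\pmod{p^a}$ has exactly two solutions modulo $p^a$ for each $a\ge1$, by Hensel's lemma. Hence
\[
\#\{k\le n:\ p^a\mid 1+k^2\}\le 2\Bigl(\frac{n}{p^a}+1\Bigr).
\]
Moreover $p^a\mid 1+k^2$ forces $p^a\le 1+n^2$. Summing over $a$ gives
\[
\nu_p(\omega_n)=\sum_{a\ge1}\#\{k\le n: p^a\mid 1+k^2\}
\le \frac{2n}{p-1}+2\,\frac{\log(1+n^2)}{\log p}.
\]
Multiplying by $\log p$ and summing over $p\le 2n$ with $p\equiv 1\pmod 4$, the first part is
\[
2n\sum_{\substack{p\le 2n\\ p\equiv1\,(4)}}\frac{\log p}{p-1}=2n\Bigl(\tfrac12\log(2n)+M+o(1)\Bigr)=n\log n+O(n)
\]
by \eqref{eq:mertens}. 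The second part is at most $2\log(1+n^2)\,\pi(2n)=O(n)$ by Chebyshev's bound.

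\emph{Conclusion.} Combining the two estimates,
\[
\log R_n\ge 2n\log n - n\log n - O(n) = n\log n - Cn.
\]
For effectiveness, I would make the $o(1)$ in \eqref{eq:mertens} and the Chebyshev constant explicit (using \cite{RamareRumely1996}); small $n$ are absorbed by enlarging $C$. Lemma~\ref{lem:rough} gives $R_n\mid K_n$, so $\log K_n\ge\log R_n$. For the upper bound, $K_n\le\omega_n\le\prod_{k\le n}2k^2$, so $\log K_n\le 2n\log n+O(n)$. That alone does not give $(1+o(1))n\log n$.

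\emph{Main obstacle: the matching upper bound.} I need to show $\log K_n\le(1+o(1))\,n\log n$. I would write $K_n = R_n\cdot\prod_{p\le 2n,\ \nu_p \text{ odd}}p$. The second factor has logarithm at most $\theta(2n)=O(n)$. For $R_n$, I would use $\log R_n\le\log\omega_n-\sum_{p\le2n}\nu_p(\omega_n)\log p$ together with a matching lower bound for $\nu_p(\omega_n)$. The lower bound should be
\[
\nu_p(\omega_n)\ge \sum_{a:\,p^a\le n} 2\Bigl(\frac{n}{p^a}-1\Bigr)\ge\frac{2n}{p-1}-O\Bigl(\frac{n}{p^{2}}\Bigr)-O(\log n),
\]
obtained by counting complete residue blocks. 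Summing with \eqref{eq:mertens} over $p\le n$ gives a small-prime contribution of at least $n\log n-O(n)$. Combined with $\log\omega_n\le 2n\log n+O(n)$, this yields $\log R_n\le n\log n+O(n)$.

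This two-sided use of Mertens over $p\equiv1\pmod 4$ is the delicate step. Care is needed with the $-1$ error terms summed over $\pi(n)$ primes and with the primes between $n$ and $2n$; both are $O(n)$ by Chebyshev.
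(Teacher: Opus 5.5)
Your proof of $\log R_n\ge n\log n-Cn$ is essentially the paper's own argument. You use the same decomposition $\log R_n=\log\omega_n-\sum_{p\le 2n}\nu_p(\omega_n)\log p$, the same Hensel count giving $\nu_p(\omega_n)\le \frac{2n}{p-1}+\frac{2\log(1+n^2)}{\log p}$, and the same appeal to \eqref{eq:mertens} and Chebyshev; your one-sided bound $\log\omega_n\ge 2\log n!$ is all that is needed there.

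Your matching upper bound $\log K_n\le n\log n+O(n)$, via complete residue blocks and Mertens again, is correct. It also supplies a direction that the paper's ``in particular'' does not actually justify, although only the lower bound is used downstream.

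One bookkeeping fix is needed. Record the block-counting error in $\nu_p(\omega_n)$ as $2\lfloor\log n/\log p\rfloor$. After multiplying by $\log p$ it is at most $2\log n$ per prime, and it sums to $O(\log n\,\pi(n))=O(n)$. Read literally, your display's $O(\log n)$ error in $\nu_p$ would sum to $O(n\log n)$. Alternatively, keep only $a=1$, which gives $\nu_p(\omega_n)\ge 2n/p-2$.
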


\begin{proof}
Since $R_n=\omega_n\big/\prod_{p\le2n}p^{\nu_p(\omega_n)}$, we have
\[
\log R_n=\log\omega_n-\sum_{p\le2n}\nu_p(\omega_n)\log p .
\]
For the first term, $\omega_n=(n!)^2\prod_{k\le n}(1+k^{-2})$ with
$1\le\prod_{k\ge1}(1+k^{-2})=\sinh\pi/\pi$, so by \eqref{eq:stirling},
\begin{equation}\label{eq:logomega}
\log\omega_n=2\log n!+O(1)=2n\log n-2n+O(\log n).
\end{equation}
For the second term, only $p=2$ and $p\equiv1\pmod4$ divide
$\omega_n$, since $1+k^2\equiv0\pmod p$ needs $-1$ a square modulo
$p$.  Here $\nu_2(\omega_n)=\lfloor(n+1)/2\rfloor$, as $\nu_2(1+k^2)$
is $1$ for odd $k$ and $0$ for even $k$.  For $p\equiv1\pmod4$ and
$i\ge1$, the congruence $k^2\equiv-1\pmod{p^i}$ has two solutions
modulo $p^i$, giving at most $2(n/p^i+1)$ admissible $k\le n$ and none
once $p^i>1+n^2$. Therefore, summing over $i$, we obtain
\[
\nu_p(\omega_n)\le\frac{2n}{p-1}+\frac{2\log(1+n^2)}{\log p}.
\]
Hence, by \eqref{eq:mertens} and Chebyshev's bound, we obtain
\[
\sum_{p\le2n}\nu_p(\omega_n)\log p
\le\frac{n+1}{2}\log2
+2n\!\!\sum_{\substack{p\le2n\\ p\equiv1\,(4)}}\!\frac{\log p}{p-1}
+2\log(1+n^2)\,\pi(2n)
= n\log n+O(n).
\]
Subtracting from \eqref{eq:logomega} gives $\log R_n\ge n\log n-Cn$.
\end{proof}

\begin{remark}\label{rem:constants}
Factoring the $1+k^2$ for $k\le3000$ gives $\log K_n\ge n\log n-4n$
throughout $5\le n\le3000$ (least margin $\approx18$, at $n=5$).
\end{remark}

\section{The exceptional set}\label{sec:exceptional}

By Theorem~\ref{thm:main} an integer value is impossible where $|x_n|$
is small, so only the large-value indices $E$ of \eqref{eq:def-E}
remain.  Writing $a_n=\sum_{k=1}^n\arctan\frac1k$, the identity
$$\arctan k=\tfrac\pi2-\arctan\tfrac1k$$
 turns \eqref{eq:def-xn} into
\begin{equation}\label{eq:theta}
x_n=\tan\theta_n,
\end{equation}
where $\theta_n=\tfrac{n\pi}{2}-a_n.$
Since $a_n$ grows only logarithmically, it rarely falls near
$\tfrac\pi2\Z$, which bounds $E$.

\begin{lemma}\label{lem:E} We have that
$\#\bigl(E\cap[1,N]\bigr)=O(\log N)$.
\end{lemma}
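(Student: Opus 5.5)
Write $x_n=\tan\theta_n$ with $\theta_n=\tfrac{n\pi}{2}-a_n$, as in \eqref{eq:theta}. The plan has three steps: turn the condition $n\in E$ into a statement that $a_n$ is very close to a point of $\tfrac\pi2\Z$; show that consecutive values of $a_n$ move by about $1/n$; and then count how many indices can land in each of the $O(\log N)$ short windows.

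\emph{Step 1: reduction to proximity.} Let $\delta_n=\dist(a_n,\tfrac\pi2\Z)$. Then $\dist(\theta_n,\tfrac\pi2+\pi\Z)$ is either $\delta_n$ or $\tfrac\pi2-\delta_n$, according to the parity of $n$ and of the nearest multiple. Since $|\tan(\tfrac\pi2-\varepsilon)|=\cot\varepsilon<1/\varepsilon$, the inequality $|x_n|>\tfrac n2+1$ forces the angle $\theta_n$ to lie within $\varepsilon<2/(n+2)<2/n$ of a pole. Hence every $n\in E$ satisfies $\delta_n<2/n$, and also $a_n\in\tfrac\pi2\Z+(-2/n,2/n)$ for a specific multiple $j\pi/2$.

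\emph{Step 2: size of the increments.} We have $a_n=\log n+\gamma'+O(1/n)$ for some constant $\gamma'$, because $\arctan(1/k)=1/k+O(k^{-3})$. Consequently $a_n\le\log N+O(1)$ for $n\le N$, so only $j\ll\log N$ multiples $j\pi/2$ are relevant. The increments $a_n-a_{n-1}=\arctan(1/n)$ lie in $(\tfrac{1}{2n},\tfrac1n]$, so $(a_n)$ is strictly increasing.

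\emph{Step 3: counting near a fixed multiple.} Fix $t=j\pi/2$ and suppose $n<n'$ both satisfy $|a_n-t|<2/n$ and $|a_{n'}-t|<2/n'$. Then
\[
a_{n'}-a_n<\tfrac4n,\qquad a_{n'}-a_n\ge\sum_{k=n+1}^{n'}\tfrac{1}{2k}\ge\tfrac12\log\tfrac{n'+1}{n+1},
\]
which gives $n'\le e^{8}(n+1)$. So all such indices lie in a dyadic-type range $[n_1,Cn_1]$. Within that range the increments are at least $1/(2Cn_1)$, while the window has length at most $4/n_1$. The number of such $n$ is therefore at most $8C+1=O(1)$.

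Summing this $O(1)$ bound over the $O(\log N)$ admissible values of $j$ gives $\#(E\cap[1,N])=O(\log N)$. The main obstacle is Step 3. The window width $2/n$ is comparable to the step size $1/n$, so the count per multiple is bounded but not $1$, and the argument must localize $n$ to a range of bounded ratio before comparing window length with increment size. Handling the small $n$ is harmless, since it only affects the implied constant.
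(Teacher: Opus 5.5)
Your proof is correct and follows essentially the same route as the paper. You reduce $n\in E$ to $\dist(a_n,\tfrac\pi2\Z)<2/n$, observe that only $O(\log N)$ multiples of $\tfrac\pi2$ are relevant, and for each multiple first confine the indices to a range of bounded ratio and then compare the window width $4/n$ with the increments $\arctan\tfrac1k$. The only difference is in the localization step: the paper gets $n'\le 2n$ from $\arctan\tfrac1k\ge\tfrac{2}{5n}$ for $n<k\le 2n$ (when $n\ge12$), giving at most $10$ indices per multiple, whereas your bound $n'\le e^8(n+1)$ holds for all $n$ but gives a much larger, still absolute, constant.
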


\begin{proof}
If $n\in E$ then $|x_n|>n/2$, so $\theta_n$ is within
$\arctan(2/n)<2/n$ of a pole; as $\tfrac{n\pi}2-\tfrac\pi2\in
\tfrac\pi2\Z$, \eqref{eq:theta} gives
\begin{equation}\label{eq:near-tau}
\dist\bigl(a_n,\tfrac\pi2\Z\bigr)<\tfrac2n .
\end{equation}
Because $a_n$ increases with $a_N\le\sum_{k\le N}1/k\le1+\log N$, the
points $\tau\in\tfrac\pi2\Z$ meeting \eqref{eq:near-tau} for some
$n\le N$ number at most $\tfrac2\pi(1+\log N)+1$.  Fix such a $\tau$
and let $n<n'$ in $E$ both satisfy \eqref{eq:near-tau} for it; then
\begin{equation}\label{eq:gap-small}
a_{n'}-a_n<\tfrac2n+\tfrac2{n'}\le\tfrac4n .
\end{equation}
From the alternating series, $\arctan t\ge t-t^3/3$ for $0<t\le1$, so
\begin{equation}\label{eq:arctan-lb}
\arctan\tfrac1{2n}\ge\tfrac1{2n}\bigl(1-\tfrac1{12n^2}\bigr)
\ge\tfrac2{5n}\qquad(n\ge1).
\end{equation}
If $n'>2n$, then $a_{n'}-a_n\ge\sum_{n<k\le2n}\arctan\tfrac1k\ge
n\cdot\tfrac2{5n}=\tfrac25>\tfrac4n$ for $n\ge12$, contradicting
\eqref{eq:gap-small}; so $n'\le2n$.  Then each of the $n'-n$ terms
$\arctan\tfrac1k$ ($n<k\le n'\le2n$) is $\ge\tfrac2{5n}$, and
\eqref{eq:gap-small} gives $(n'-n)\tfrac2{5n}<\tfrac4n$, i.e.
$n'-n<10$.  Thus, past $n=12$, each $\tau$ carries at most $10$
elements of $E$, and the count is $O(\log N)$.
\end{proof}

The indices in $E$ are also predictable.  As $\arctan\tfrac1k-\tfrac1k
=O(k^{-3})$ is summable, $a_n=\log n+c_0+O(1/n)$, where
\[
c_0=\gamma+\sum_{k=1}^{\infty}\Bigl(\arctan\tfrac1k-\tfrac1k\Bigr)
=0.301640\ldots
\]
and $\gamma=\lim_{n\to\infty}\bigl(\sum_{k\le n}\tfrac1k-\log n\bigr)$
is Euler's constant.

\begin{proposition}\label{prop:location}
Every $n\in E$ lies within $O(1)$ of some $n_j:=e^{\,j\pi/2-c_0}$ with
$j\ge2$.
\end{proposition}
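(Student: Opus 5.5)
The plan is to combine the proximity condition \eqref{eq:near-tau} from the proof of Lemma~\ref{lem:E} with the asymptotic $a_n=\log n+c_0+O(1/n)$ recorded just before the proposition.

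First, let $n\in E$. From the proof of Lemma~\ref{lem:E} there is some $\tau=j\pi/2\in\tfrac\pi2\Z$ with $|a_n-\tau|<2/n$. Since $a_n>0$ and $a_n\to\infty$, we have $j\ge1$. We will rule out small $j$ at the end; this is a finite check.

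Next, substitute the asymptotic into this inequality. This gives
\[
\bigl|\log n+c_0-\tfrac{j\pi}{2}\bigr|\le \tfrac2n+O\bigl(\tfrac1n\bigr)=O\bigl(\tfrac1n\bigr),
\]
that is, $|\log n-\log n_j|=O(1/n)$ with $n_j=e^{j\pi/2-c_0}$. Exponentiating yields $n/n_j=e^{O(1/n)}=1+O(1/n)$. In particular $n_j\asymp n$ for $n$ large, so $|n-n_j|=n_j\,|e^{O(1/n)}-1|=O(n_j/n)=O(1)$.

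The constant in the $O(1/n)$ term of $a_n$ needs to be explicit enough for this step. Using $\sum_{k>n}(1/k-\arctan(1/k))=O(n^{-2})$ together with $H_n=\log n+\gamma+O(1/n)$ is enough, and it is routine.

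Finally, handle small cases. For the finitely many $n$ below any fixed threshold, the bound holds trivially by enlarging the $O(1)$ constant, because such $n$ lie within bounded distance of $n_2=e^{\pi-c_0}\approx17$. The same observation disposes of $j=1$: then $n_1=e^{\pi/2-c_0}\approx3.6$, and the estimate above shows any $n\in E$ attached to $j=1$ is bounded, so it lies within $O(1)$ of $n_2$ as well.

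The only mild obstacle is this bookkeeping at small $n$. Since $E$ requires $n\ge5$ and only boundedly many $n$ are involved, absorbing them into the constant is legitimate. The claim is purely asymptotic, so no explicit constant needs to be certified.
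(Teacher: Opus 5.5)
Your proposal is correct and is essentially the paper's argument: substitute $a_n=\log n+c_0+O(1/n)$ into the proximity bound \eqref{eq:near-tau} to get $\log n=j\pi/2-c_0+O(1/n)$, then exponentiate to conclude $|n-n_j|=O(1)$. Your extra bookkeeping (justifying the $O(1/n)$ term, and handling $j\le1$ and small $n$ by enlarging the constant) only makes explicit what the paper leaves implicit; in fact $j\le1$ can be ruled out directly, since for $n\ge5$ one has $a_n\ge a_5\approx2.01>\pi/2+2/5\ge\pi/2+2/n$.
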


\begin{remark}
To illustrate Proposition~\ref{prop:location}, we note that for  $j=2,\dots,7$ these values are $17.1,82.3,396.1,1905.2,9164.9,
44087.6$, and
\[
E\cap[5,6\cdot10^4]=\{15,17,\ 80,82,\ 395,397,\ 1904,\ 1906,\
9163,9165,\ 44086,44088\}.
\]
\end{remark}

\begin{proof}[Proof of Proposition~\ref{prop:location}]
By \eqref{eq:near-tau}, $|a_n-j\tfrac\pi2|<2/n$ for some $j$;
substituting $a_n=\log n+c_0+O(1/n)$ gives $\log
n=j\tfrac\pi2-c_0+O(1/n)$, i.e. $|n-n_j|=O(1)$.  The list is a direct
computation.
\end{proof}

\begin{remark}\label{rem:kappa}
Consecutive $n_j$ have ratio $e^{\pi/2}$, alternating between the even
and odd subsequences of $(x_n)$.  Along the even subsequence the ratio
is $e^{\pi}=23.14\ldots$, matching the empirical $\kappa_\infty
\approx23.1$ of \cite[\S7.3]{AMM2008}.  Thus \cite[Conj.~7.7]{AMM2008}
holds asymptotically with $\kappa_\infty=e^{\pi}$.
\end{remark}

\begin{proof}[Proof of Corollary~\ref{cor:density}]
With $C$ as in Lemma~\ref{lem:growth}, choose $n_0$ so that
$e^{(n\log n-Cn)/2}-1>n/2+1$ for $n\ge n_0$.  If $n\ge n_0$, $n\notin
E$, and $x_n=m\in\Z$, then Theorem~\ref{thm:main} and
Lemma~\ref{lem:growth} give
$\tfrac n2+1\ge|m|\ge\sqrt{K_n-1}\ge\sqrt{e^{n\log n-Cn}-1}>\tfrac
n2+1$, a contradiction.  The count is Lemma~\ref{lem:E}.  \end{proof}

\section{Sharpness, consequences, and the exceptional indices}
\label{sec:remarks}

\subsection*{Sharpness}
The divisibility of Theorem~\ref{thm:main} is attained at every known
integer value.  Table~\ref{tab:sharp} lists $K_n$, the least $|m|$
with $K_n\mid(1+m^2)$, and $x_n$.

\begin{table}[ht]
\centering
\begin{tabular}{|c|c|c|c|}
\hline
$n$ & $K_n$ & $\min\{|m|:K_n\mid(1+m^2)\}$ & $x_n$\\
\hline
$1$ & $2$ & $1$ & $1$\\ \hline
$2$ & $10$ & $3$ & $-3$\\ \hline
$3$ & $1$ & $0$ & $0$\\ \hline
$4$ & $17$ & $4$ & $4$\\ \hline
$5$ & $442$ & $21$ & $-9/19$\\ \hline
$6$ & $16354$ & $931$ & $105/73$\\ \hline
\end{tabular}
\smallskip
\caption{For $n\le4$ the value $x_n$ is the minimal solution of
$K_n\mid(1+m^2)$; from $n=5$ on, the least admissible $|m|$ exceeds every
attainable value.}
\label{tab:sharp}
\end{table}

Each of the four integer values is thus the least $m$ with
$K_n\mid(1+m^2)$.  The barrier first binds at $n=5$, where
$K_5=442=1+21^2$: the minimal level is $m=21$, and
$(1+21^2)\omega_5=4420^2$---the example of J.~McLaughlin
\cite[p.~1825]{AMM2008}---since $\omega_5=442\cdot10^2$.  What fails at
$n=5$ is the angle, not the arithmetic: $x_5=-\tfrac9{19}$.

\subsection*{Two consequences}
The kernel bound weakens the primality hypothesis of
\cite[Thm.~4.5]{AMM2008} to a single large prime factor of $\omega_n$.

\begin{corollary}\label{cor:onelarge}
Let $n\notin E$.  If $\omega_n$ has a prime factor
$p\ge(\lfloor n/2\rfloor+2)^2$, then $x_n\notin\Z$.
\end{corollary}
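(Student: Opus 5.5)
Suppose, for contradiction, that $n\notin E$, that $\omega_n$ has a prime factor $p\ge(\lfloor n/2\rfloor+2)^2$, and that $x_n=m\in\Z$. Since $n\notin E$, either $n\le4$ or $|m|\le n/2+1$. Because $m$ is an integer, the second case gives $|m|\le\lfloor n/2\rfloor+1$, so $1+m^2\le(\lfloor n/2\rfloor+1)^2+1<(\lfloor n/2\rfloor+2)^2\le p$. In the first case, $n\le4$, I will treat the small indices by direct inspection of Table~\ref{tab:sharp}: checking that no prime factor of $\omega_n$ reaches the threshold $(\lfloor n/2\rfloor+2)^2$, so the hypothesis is vacuous. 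For example, $\omega_4=2\cdot5\cdot10\cdot17$ and $17<16$ fails, so I need to be careful; if some small $n$ escapes, I would restrict to $n\ge5$ as in the definition of $E$.

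The key step is to show that $\nu_p(\omega_n)$ is odd, so that Theorem~\ref{thm:kernel}(1) applies. Since $p\ge(\lfloor n/2\rfloor+2)^2>2n$ once $n\ge1$ (indeed $(n/2+1.5)^2>2n$ always), Lemma~\ref{lem:rough} gives $\nu_p(\omega_n)\le1$. Because $p\mid\omega_n$, this means $\nu_p(\omega_n)=1$, which is odd. Hence $p\mid K_n$, and Theorem~\ref{thm:kernel}(1) yields $p\mid 1+m^2$.

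Since $1+m^2\ge1$ is positive, divisibility forces $p\le1+m^2$. This contradicts $1+m^2<p$ from the first step, so $x_n\notin\Z$.

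The only delicate points are the integer rounding, which is needed to pass from $|m|\le n/2+1$ to $|m|\le\lfloor n/2\rfloor+1$, and the comparison $p>2n$ that is needed to invoke Lemma~\ref{lem:rough}. I expect the main nuisance to be handling the boundary cases $n\le4$, which lie outside $E$ by definition. Since $E$ is defined only for $n\ge5$, I would check those finitely many $n$ directly, or else read the corollary as concerning $n\ge5$, where it is consistent with the conjecture.
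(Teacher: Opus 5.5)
For $n\ge5$ your argument is the paper's own. Since $p>2n$, Lemma~\ref{lem:rough} forces $\nu_p(\omega_n)=1$, hence $p\mid K_n\mid 1+m^2$, which contradicts $1+m^2<p$. Your rounding to $|m|\le\lfloor n/2\rfloor+1$ is just a tidier version of the paper's chain $1+m^2\ge p>1+(n/2+1)^2\ge1+m^2$.

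You should settle the case $n\le4$ rather than hedge on it. Checking it by inspection cannot work at $n=4$, because $n=4$ is a genuine counterexample to the statement as literally written:
\begin{itemize}
\item $4\notin E$, since $E$ only contains $n\ge5$;
\item $\omega_4=1700=2^2\cdot5^2\cdot17$ has the prime factor $17\ge(\lfloor4/2\rfloor+2)^2=16$;
\item yet $x_4=4\in\Z$, and consistently $17\mid1+4^2$.
\end{itemize}
For $n=1,2,3$ the hypothesis is vacuous: $\omega_1=2$, $\omega_2=10$ and $\omega_3=100$ have no prime factor reaching $4$, $9$ and $9$ respectively.

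So the restriction to $n\ge5$ is forced, not optional. The paper's one-line proof makes the same tacit assumption when it uses $|m|\le n/2+1$, which $n\notin E$ guarantees only for $n\ge5$. Alternatively, drop the condition $n\ge5$ from the definition of $E$. That puts $4$ (and $2$) into $E$ and makes the corollary true for all $n$, with your argument unchanged.
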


\begin{proof}
Such $p$ exceeds $2n$, so $p\mid K_n$ by Lemma~\ref{lem:rough}; an
integer $m$ would give $$1+m^2\ge p>1+(n/2+1)^2\ge1+m^2.$$
\end{proof}

It also gives a Pell-type reformulation.

\begin{proposition}\label{prop:pell}
If $x_n=m\in\Z$, then $m^2-K_nu^2=-1$ for some $u\in\Z_{\ge1}$.
\end{proposition}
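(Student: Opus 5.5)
Starting from the identity \eqref{eq:square-identity}, $\omega_n=(1+m^2)A_n^2$, we want to show that $(1+m^2)/K_n$ is a perfect square. Write $\omega_n=K_n s^2$ with $s\in\Z_{\ge1}$. This is possible by the definition \eqref{eq:def-K}: every prime appears in $\omega_n/K_n$ to an even power, so $\omega_n/K_n$ is a square. By Theorem~\ref{thm:main}(1), $K_n\mid 1+m^2$, so we may write $1+m^2=K_n t$ with $t\in\Z_{\ge1}$. Substituting both expressions into \eqref{eq:square-identity} gives
\[
K_n s^2 = K_n t A_n^2, \qquad\text{hence}\qquad s^2=tA_n^2 .
\]
Since $A_n\neq0$ (as recorded before \eqref{eq:xn-AB}), we get $t=(s/A_n)^2$. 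A rational square that is an integer is the square of an integer, so $t=u^2$ with $u=|s/A_n|\in\Z_{\ge1}$. Then $1+m^2=K_nu^2$, i.e.\ $m^2-K_nu^2=-1$, as claimed.

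An equivalent prime-by-prime route is to compare valuations. For every prime $p$, $\nu_p(1+m^2)=\nu_p(\omega_n)-2\nu_p(A_n)$, and this has the same parity as $\nu_p(K_n)$. Hence every valuation of $(1+m^2)/K_n$ is even and nonnegative (nonnegative because of the divisibility in Theorem~\ref{thm:main}(1)), so the positive integer $(1+m^2)/K_n$ is a square.

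There is no real obstacle here. The one point to handle with care is justifying that $t$ is an integer square rather than merely a rational square. This follows either from $A_n\neq0$ together with integrality of $t$, or directly from the valuation parity argument. Positivity of $u$ is automatic because $1+m^2>0$.
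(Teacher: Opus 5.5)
Your proposal is correct and follows essentially the same route as the paper. The paper also writes $\omega_n=K_ns^2$ and observes that $(1+m^2)/K_n=(s/A_n)^2$ is a rational square which is a positive integer by Theorem~\ref{thm:main}(1), hence equals $u^2$ for some positive integer $u$. Your valuation-parity variant is just a prime-by-prime restatement of that same argument.
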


\begin{proof}
Write $\omega_n=K_ns^2$.  By \eqref{eq:square-identity},
$(1+m^2)/K_n=(s/A_n)^2$ is a rational square that is a positive
integer by Theorem~\ref{thm:main}, hence a perfect square $u^2$.
\end{proof}

The case $m=0$ of \eqref{eq:square-identity} reads $\omega_n=A_n^2$,
so Cilleruelo's theorem \cite{Cilleruelo2008} recovers $x_n\ne0$ for
$n\ne3$ \cite[Thm.~2.6]{AMM2008}. The conjecture is the union over
levels $m$ of statements whose $m=0$ case already required a proof.

\subsection*{The exceptional indices}
The remaining indices near $n_j=e^{j\pi/2-c_0}$ appear genuinely hard.
An integer value there would place $\theta_n$ within $\asymp
e^{-h}$ of a pole, where $h$ is the height of $(1+im)/(1-im)$.  The
associated linear form in two logarithms would then have to be smaller
than the trivial scale, beyond the reach of even conjecturally sharp
lower bounds; the only near-equalities of this kind are the Machin-type
relations at $n\le4$.  The arithmetic side is not easier.
By Proposition~\ref{prop:pell}, integrality needs $K_n$ of the form
$(1+m^2)/u^2$, which does occur ($K_4=17=1+4^2$ is realized,
$K_5=442=1+21^2$ is not).  A proof must couple the square class of
$\omega_n$ to the angle.

Finally, the two methods are complementary. Of the ten exceptional
indices below $2\cdot10^4$, $9163\equiv5\!\pmod{19}$ falls under
\cite[Thm.~1.6]{Moll2013} and $395\equiv8\!\pmod{43}$ under
\cite[Note~7.3]{Moll2013}, so the congruence certificates of
\cite{Moll2013} reach into exactly the set where the present method fails to give information.

\subsection*{Acknowledgements}
The author thanks Tewodros Amdeberhan for bringing this problem to
his attention, and thanks Jujian Zhang for assembling the GitHub repository for the relevant AxiomProver files and artifacts.

\appendix
\section{AxiomProver's autonomous Lean formalization}\label{sec:AI}

AxiomProver is an AI system for mathematical research via formal proof that is
currently under development.  As an early test case, we treated the results of
this paper as an end-to-end formalization target: the formal statements and
their proofs were produced by AxiomProver from a natural-language statement of
the problem, and were then verified by the Lean proof assistant.  Using that
formal development as a reference, the human author wrote the exposition in the
main text for a mathematical audience.  Readers not interested in automated
theorem proving may skip this appendix.

\subsection*{Scope of the formalization}
The formalization concerns the three main results of this paper, stated in Lean
for the Gaussian integers $P_n=\prod_{k=1}^{n}(1+ik)$, the rational sequence
$x_n=\Ima(P_n)/\Rea(P_n)$, and the arctangent angle sum
$a_n=\sum_{k=1}^{n}\arctan(1/k)$.  Writing $A_n=\Rea(P_n)$, $B_n=\Ima(P_n)$,
$\omega_n=A_n^2+B_n^2=\prod_{k=1}^{n}(1+k^2)$, and letting $K_n$ be the
squarefree kernel of $\omega_n$, the formalized statements are the following.
\begin{itemize}
\item \texttt{thm\_main} --- If $x_n=m$ is an integer (with $A_n\ne0$), then
  $K_n\mid(1+m^2)$, and if $K_n>1$ then $|m|\ge\sqrt{K_n-1}$.  This is
  Theorem~\ref{thm:main}(1) together with the divisibility bound of
  Theorem~\ref{thm:main}(2); the asymptotic form
  $\log|m|\ge(\tfrac12+o(1))\,n\log n$, which relies on the growth estimate of
  Lemma~\ref{lem:growth}, is part of the human exposition and was not included
  in the formalized statement.
\item \texttt{lem\_proximity} --- For every $n$ in the exceptional set
  $E=\{\,n\ge5:|x_n|>n/2+1\,\}$, there is an integer $j$ with
  $|a_n-j\tfrac\pi2|<\tfrac2n$.  This is the proximity bound
  \eqref{eq:near-tau} underlying Lemma~\ref{lem:E}.
\item \texttt{cor\_density} --- $\#\bigl(E\cap[1,N]\bigr)=O(\log N)$, which is
  Lemma~\ref{lem:E}, and hence the conjecture holds on a set of density one.
\end{itemize}

\subsection*{Process}
The formal proofs were developed and verified using Lean \textbf{4.28.0};
compatibility with earlier or later versions is not guaranteed, owing to the
evolving nature of the Lean~4 compiler and its core libraries.  The proofs are
fully sorry-free: they add no axioms and leave no stray \texttt{sorry}.  All
relevant files are posted in the repository
\begin{center}
  \url{https://github.com/AxiomMath/TanArctan}
\end{center}
The input given to AxiomProver consisted of
\begin{itemize}
\item \texttt{input/main\_engine.tex}, a self-contained natural-language
  statement of the definitions and of the three target results; and
\item \texttt{input/task.md}, which instructs the system to read
  \texttt{main\_engine.tex} and to formalize and prove the theorem, lemma, and
  corollary with a fully sorry-free proof.
\end{itemize}
From these inputs, AxiomProver autonomously produced
\begin{itemize}
\item \texttt{problem.lean}, a Lean formalization of the problem statement; and
\item \texttt{solution.lean}, a complete Lean formalization of the proof.
\end{itemize}
After AxiomProver generated the solution, the human author wrote this paper for
human readers.  A research paper is a narrative designed to communicate ideas to
people, whereas a Lean file is written to satisfy a proof-checking kernel. At
first glance, therefore, the formal proofs do not resemble the narrative
presented here.

\section*{Declaration of generative AI and AI-assisted technologies in the manuscript preparation process}
As described in the preceding appendix, AxiomProver was used to produce and
formally verify, in Lean, the proofs of the results of this paper.  The
exposition was subsequently written by the human author.

\end{document}